\def\cal{\mathcal}
\def\Bbb{\mathbb}
\def\r{\rangle}
\def\l{\langle}
\newtheorem{thm}{Theorem}[section]
\newtheorem{exm}{Example}[section]
\newtheorem{lemma}{Lemma}[section]
\newtheorem{defn}{Definition}[section]
\newtheorem{rem}{Remark}[section]
\numberwithin{equation}{section}
\begin{document}
\title[A sufficient condition for a $2$-dimensional orbifold to be good]{A sufficient condition for a
  $2$-dimensional orbifold to be good}
\author[S.K. Roushon]{S. K. Roushon}
\address{School of Mathematics\\
Tata Institute\\
Homi Bhabha Road\\
Mumbai 400005, India}
\email{roushon@math.tifr.res.in} 
\urladdr{http://www.math.tifr.res.in/\~\ roushon/}
\thanks{January 28, 2019}
\thanks{To appear in The Mathematics Student}
\begin{abstract}
  We prove that a connected $2$-dimensional orbifold with finitely
  generated and infinite orbifold fundamental
group is good. We also describe all the good $2$-dimensional orbifolds with finite orbifold fundamental groups.\end{abstract}

\keywords{Orbifold, orbifold fundamental group.}

\subjclass[2010]{Primary: 14H30, 57M60 Secondary: 57M12.}
\maketitle


\section{Introduction}

We start with a short introduction to orbifolds.

The concept of orbifold was first introduced in \cite{Sat}, and 
was called `V-manifold'. Later, it was revived in \cite{Thu}, 
with the new name {\it orbifold}, and 
{\it orbifold fundamental group} of a 
connected orbifold was defined.

\begin{defn} {\rm An {\it orbifold} is a second countable and Hausdorff topological space $M$, 
which at every point looks like the quotient space of ${\Bbb R}^n$, 
for some $n$, by some finite group action. More precisely, there is an open covering 
$\{U_i\}_{i\in I}$ of $M$ together with a collection ${\cal O}_M=\{(\tilde U_i, p_i, G_i)\}_{i\in I}$ (called 
{\it orbifold charts}), 
where for each $i\in I$, $\tilde U_i$ is an open set in ${\Bbb R}^n$, for some $n$, $G_i$ is a finite 
group acting on $\tilde U_i$ and $p_i:\tilde U_i\to U_i$ is the quotient map, via an identification of 
$\tilde U_i/G_i$ with $U_i$ by some homeomorphism. 

\medskip

\centerline{\includegraphics[height=7cm,width=4.5cm,keepaspectratio]{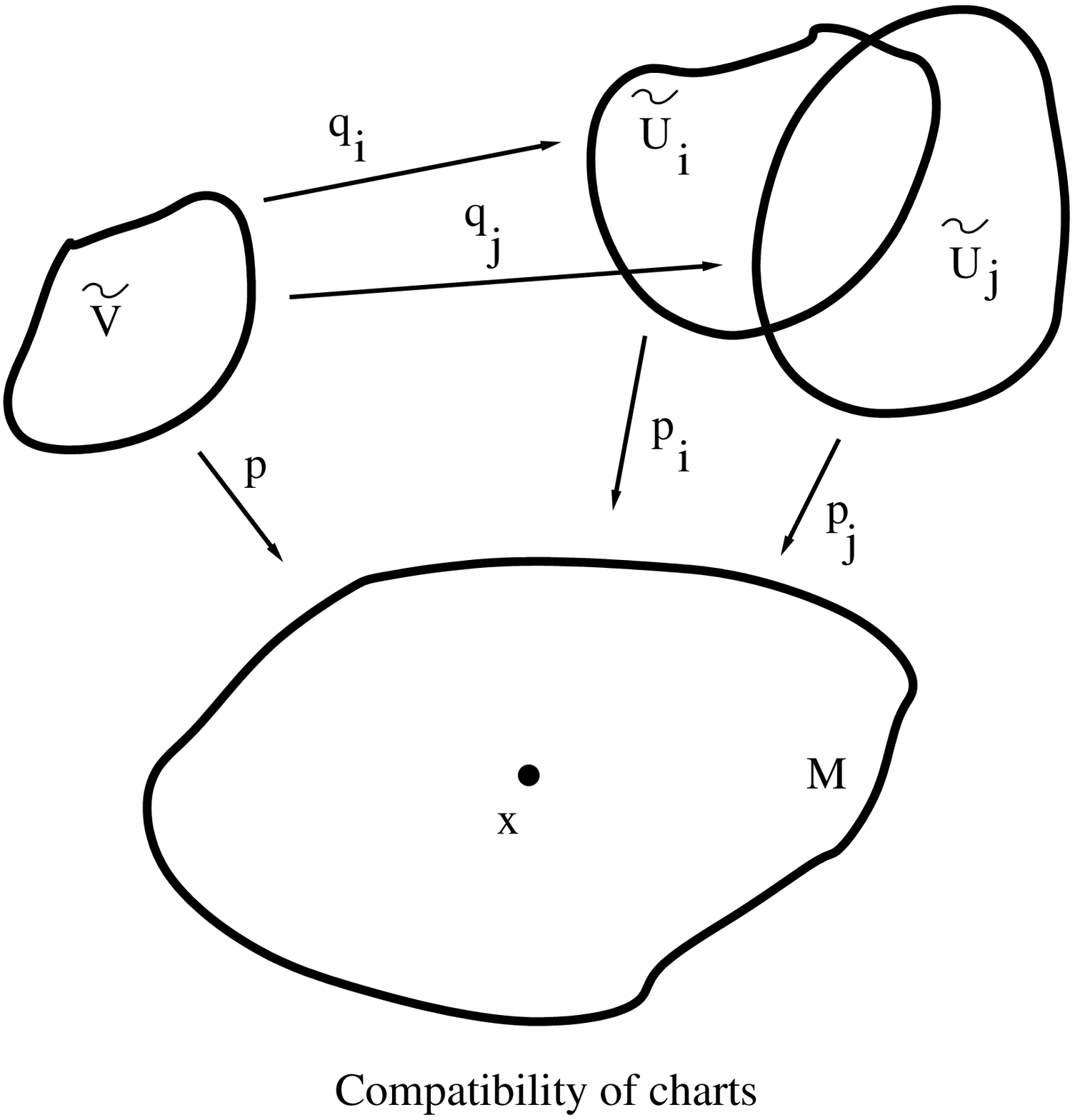}}

\centerline{Figure 1: Orbifold charts}

\medskip

Furthermore, the 
following compatibility condition is satisfied. Given $x\in U_i\cap U_j$, for $i,j\in I$ there 
is a neighborhood $V\subset U_i\cap U_j$ of $x$ and a chart $(\tilde V, p, G)\in {\cal O}_M$ together with embeddings 
$q_i:\tilde V\to \tilde U_i$ and $q_j:\tilde V\to \tilde U_j$ such that, $p_i\circ q_i=p$ and 
$p_j\circ q_j=p$ are satisfied.

The pair $(M, {\cal O}_M)$ is called an {\it orbifold} and $M$ is called its 
{\it underlying space}. 
 The finite groups $\{G_i\}_{i\in I}$ are called 
 the {\it local groups}. The union of the images (under $p_i$) of
 the fixed point sets of the action of $G_i$ 
on $\tilde U_i$, when $G_i$ acts non-trivially, is called the 
{\it singular set}. Points outside the singular set are called {\it regular points}. 
In case the local group is cyclic (of order $k$) acting by 
rotation about the origin on the Euclidean space, the image of the 
origin is called a {\it cone point} of {\it order} $k$. If the local group at some 
point acts trivially then it is a regular point. A regular point is also
called a {\it manifold point}.}\end{defn} 

Clearly, if all the local groups are either trivial or acts trivially
then the orbifold is a manifold.
The easiest  
example of an orbifold is the quotient of a manifold by a finite group. 
Also see Example \ref{example-covering} below.

In the rest of the paper we will not use the full notation of an orbifold as
defined above, unless it is explicitly needed. In general, we will use the same
letter $M$ both for the orbifold and its 
underlying space, which will be clear from the context.

As in the case of a manifold, {\it dimension} of a connected orbifold is defined. 
One can also define an {\it orbifold with boundary} in the same way we define a manifold with boundary. 
In the definition we have to replace ${\Bbb R}^n$ by the upper half space 
${\Bbb R}^n_+$. 

In this paper we consider orbifolds with boundary (may be empty).

A boundary component of the underlying space of an orbifold 
has two types, 
one which we call {\it manifold boundary} and the other {\it orbifold boundary}. These 
are respectively defined as points on the boundary (of the underlying space) 
with local group acting trivially or non-trivially. 

The notion of {\it orbifold covering space} was  
defined in \cite{Thu}. We refer the reader to this source for the basic 
materials and examples. But, we recall below the definition and properties we need.

\begin{defn}\label{covering}{\rm A connected orbifold $(\hat M, {\cal O}_{\hat M})$ is called an {\it orbifold covering space} 
of a connected orbifold $(M, {\cal O}_M)$ if there is a surjective map $f:\hat M\to M$, called 
an {\it orbifold covering map}, such that given $x\in M$ there is an orbifold chart 
$(\tilde U, p, G)$ with $x\in U$ and the following is satisfied. 

$\bullet$ $f^{-1}(U)=\cup_{i\in I} V_i$, where for each $i\in I$, $(\tilde V_i, q_i, H_i)\in {\cal O}_{\hat M}$, $V_i$ is a 
component of $f^{-1}(U)$, there is an injective homomorphism $\rho_i:H_i\to G$ and $V_i$ is 
homeomorphic to $\tilde U/(\rho_i(H_i))$ making the two squares in the following diagram commutative. 

\centerline{
\xymatrix{
\tilde U/(\rho_i(H_i))\ar[r]\ar[d]& V_i\ar[d]^{f|_{V_i}}&\ar[l]^{q_i}\tilde {V_i}\ar[d]^{\tilde f}\\
\tilde U/G\ar[r] & U&\ar[l]^p\tilde U.}} 

Where $\tilde f$ is $\rho_i$-equivariant.}\end{defn}

Here note that the map on the underlying spaces of an orbifold covering map need not be a
covering map in the ordinary sense.

\begin{exm}\label{example-covering}{\rm Given a group $\Gamma$ and a properly discontinuous 
action of $\Gamma$ on a manifold $M$, the quotient space $M/\Gamma$ has an 
orbifold structure and the quotient map $M\to M/\Gamma$ 
is an orbifold covering map. See [\cite{Thu}, Proposition 5.2.6].
Furthermore, if $H$ is a subgroup of $\Gamma$, then 
the map $M/H\to M/\Gamma$ is an orbifold covering map. Therefore, the quotient map of a finite 
group action on an orbifold is always an orbifold covering map.}\end{exm}

In general, an orbifold need not have a manifold as an orbifold covering space.

\begin{defn} (\cite{Thu}) {\rm If an orbifold has an orbifold covering space which is a manifold, then the 
  orbifold is called {\it good} or {\it developable}.}\end{defn}

In the above example $M/\Gamma$ is a good orbifold.

\begin{rem}\label{good-orbifold}{\rm One can show that a good compact   
$2$-dimensional orbifold has a finite sheeted orbifold covering space, which is a 
manifold ([\cite{Sc}, Theorem 2.5]). 
In the case of closed (that is, compact and the underlying space has empty boundary)
$2$-dimensional orbifolds,  
only the sphere with one cone point and the sphere with 
two cone points of different orders are not good orbifolds. See the figure 
below. Also, see [\cite{Thu}, Theorem 5.5.3].}\end{rem}

\centerline{\includegraphics[height=9cm,width=6.5cm,keepaspectratio]{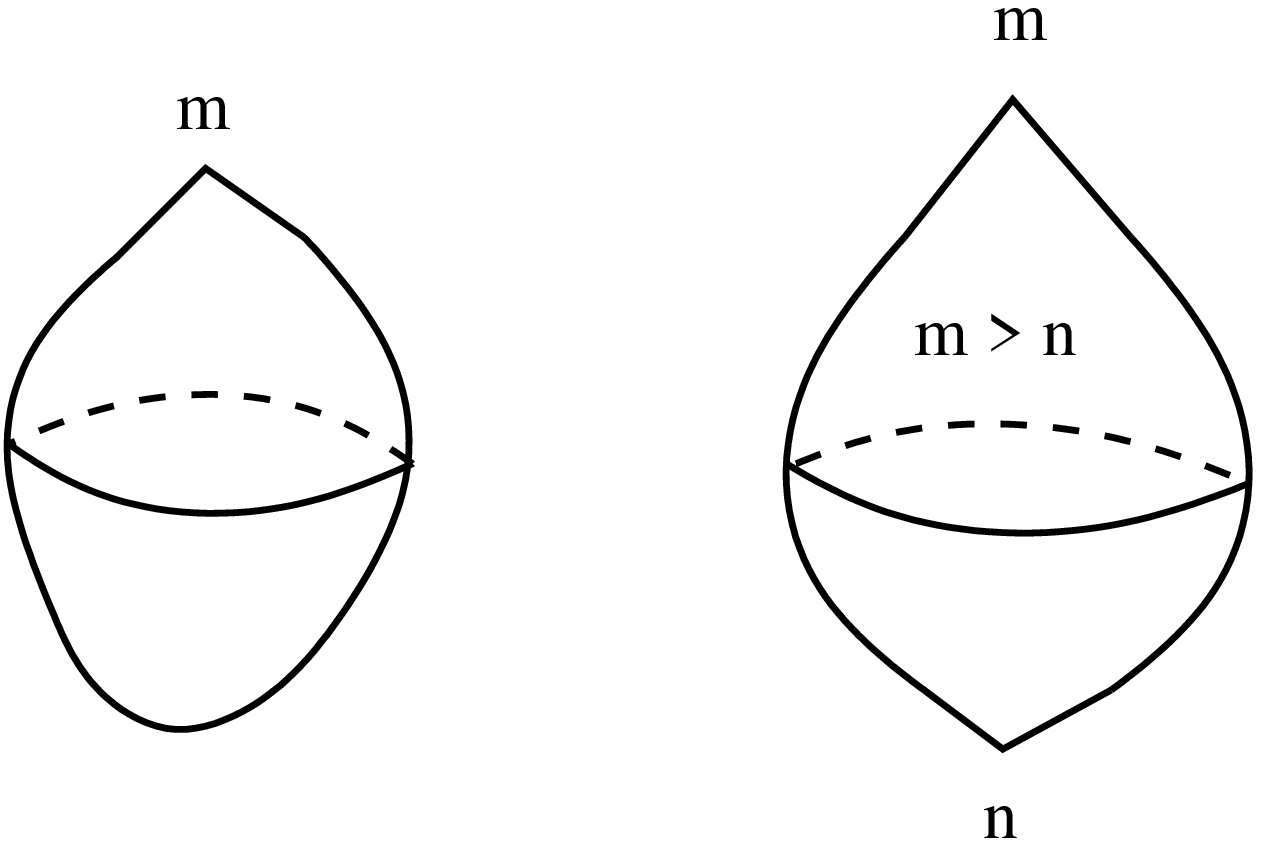}}

\centerline{Figure 2: Bad orbifolds}

\medskip

\begin{defn}{\rm Let $f:\hat M\to M$ be an orbifold covering map of 
connected orbifolds. Let $x\in M$ be a regular point and 
$\hat x\in \hat M$ with $f(\hat x)=x$. Then $\hat M$ is called the 
{\it universal orbifold cover} of $M$ if given any other connected orbifold covering 
$g:N\to M$ with $g(y)=x$, for $y\in N$, there is a unique $h:\hat M\to N$ so that $h(\hat x)=y$, $h$ 
is an orbifold covering map and 
$h\circ g=f$.}\end{defn}

\begin{defn}{\rm 
The universal orbifold cover always exists for a connected orbifold $M$ ([\cite{Thu}, Proposition 5.3.3]) and 
its group of Deck transformations  
is defined as the {\it orbifold fundamental group} of $M$. It is denoted 
by $\pi_1^{orb}(M)$.}\end{defn}

In Remark \ref{good-orbifold} we have already seen a classification
of closed good $2$-dimensional orbifolds. But in the literature I did not find any condition
which will say when an orbifold (compact or not) is good.

In this article we use the above classification of good closed orbifolds, to
prove the following theorem which gives a sufficient condition for
an arbitrary $2$-dimensional orbifold to be good.

\begin{thm} \label{mt} Let $S$ be a connected $2$-dimensional orbifold with finitely 
generated and infinite orbifold fundamental group. Then, $S$ is good.\end{thm}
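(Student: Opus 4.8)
The plan is to reduce the general statement to the already-known classification of closed $2$-dimensional orbifolds recalled in Remark \ref{good-orbifold}. First I would dispose of the case where the underlying space of $S$ is noncompact or has nonempty boundary (manifold or orbifold boundary). In that situation $S$ deformation retracts onto a $1$-complex possibly carrying cone points, and more to the point $\pi_1^{orb}(S)$ is a free product of cyclic groups (some finite, coming from the cone points, and some infinite $\mathbb Z$ factors coming from the underlying $1$-complex / handles); such a group acts on a tree, hence on a contractible $1$-manifold, so one can build an honest manifold orbifold cover directly, or more cleanly, one observes that such an $S$ has an orbifold covering space that is a surface with boundary and no cone points. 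I expect this case to be essentially formal once the structure theory of noncompact or bordered $2$-orbifolds is invoked.

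The main case is therefore $S$ \emph{closed}. Here the strategy is: by Remark \ref{good-orbifold}, the only closed $2$-orbifolds that are not good are $S^2$ with one cone point (the "teardrop") and $S^2$ with two cone points of different orders (the spindle). Both of these are \emph{finite} orbifold fundamental group cases: the teardrop $S^2(p)$ has $\pi_1^{orb}$ trivial, and $S^2(p,q)$ with $p\ne q$ has finite (indeed cyclic-ish, order related to $\gcd$) orbifold fundamental group. Hence under the hypothesis that $\pi_1^{orb}(S)$ is infinite, $S$ is not on the bad list, so $S$ is good. The content is thus to verify the two bad orbifolds have finite $\pi_1^{orb}$ — which is standard, e.g. via the presentation $\langle x \mid x^p\rangle$ for the spindle type, or via the Euler-characteristic/geometry dichotomy (elliptic orbifolds are exactly the ones with finite $\pi_1^{orb}$, and the two bad orbifolds are elliptic).

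Concretely, the steps I would carry out are: (1) reduce to the closed case by handling boundary/noncompactness via the free-product-of-cyclics structure of $\pi_1^{orb}$ and the existence of a bordered-surface orbifold cover; (2) for closed $S$, quote Remark \ref{good-orbifold} to list the only non-good closed $2$-orbifolds; (3) compute $\pi_1^{orb}$ for those two exceptional orbifolds and observe it is finite, so the infiniteness hypothesis excludes them; (4) conclude $S$ is good. The hardest step is (1): one must be careful that "finitely generated $\pi_1^{orb}$" genuinely forces a controlled topological type of a noncompact or bordered $2$-orbifold (ruling out infinite-genus / infinitely-many-cone-point pathologies), and one must actually produce the claimed manifold orbifold cover rather than merely asserting goodness — the cleanest route is to exhibit a finite-index torsion-free subgroup of $\pi_1^{orb}(S)$ (using that a finitely generated virtually-free group, or more generally a finitely generated group acting properly on a surface, is virtually torsion-free) and take the corresponding cover, which is then a manifold. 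I would also remark that the infiniteness hypothesis is genuinely needed only because of the teardrop and spindle, and indeed the abstract then records, in the finite $\pi_1^{orb}$ case, exactly which closed $2$-orbifolds remain good.
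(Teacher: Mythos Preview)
Your overall strategy is sound and, for the closed case, coincides with the paper's: both of you reduce to the list in Remark~\ref{good-orbifold} and observe that the two bad orbifolds have finite $\pi_1^{orb}$.

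Where you differ is in the treatment of the non-closed situation. You propose to attack it directly via the group theory of $\pi_1^{orb}(S)$ (free product of cyclics, virtual torsion-freeness, Bass--Serre theory). The paper instead \emph{reduces the non-closed case to the closed one by systematic doubling}: first pass to an orientable cover of the underlying surface (Lemma~\ref{induced-covering}), then double along the orbifold boundary to kill reflector lines and corner reflectors, then double along the manifold boundary, obtaining a boundaryless orientable orbifold $D\tilde S$ with only cone points. For noncompact $D\tilde S$ the paper exhausts by compact pieces $S_i$, computes the abelianization of $\pi_1^{orb}(DS_i)\cong\mathbb Z^{2g_i}\oplus K_{k_i-1}$ from the standard presentation, and uses finite generation of $\pi_1^{orb}$ to force the genera $g_i$ and cone-point counts $k_i$ to stabilise; this is exactly the ``controlled topological type'' step you flag as the crux of your step~(1), but carried out concretely rather than by invoking structure theory. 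The payoff of the paper's route is that it is elementary and self-contained (no appeal to virtual torsion-freeness or actions on trees); the payoff of yours is that, once the finite-type claim is secured, goodness in the non-closed case follows without any use of the infiniteness hypothesis, which is conceptually cleaner and dovetails with Remark~\ref{final-remark}.

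One point to tighten in your outline: you speak only of cone points, but a general $2$-orbifold also carries reflector lines and corner reflectors, and for such $S$ the group $\pi_1^{orb}(S)$ is not literally a free product of cyclic groups (dihedral pieces appear). The paper disposes of these singularities explicitly via the orbifold-boundary doubling step; you should either do the same or pass to the orientation double cover before invoking your free-product description.
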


\begin{rem}{\rm There are $2$-dimensional good orbifolds with 
    finite orbifold fundamental groups. The simplest example is the $2$-dimensional
    disc quotiented out by the action of a finite cyclic group acting by rotation
    around the center. See Remark \ref{final-remark} for a classification of
    such orbifolds.}\end{rem}

\section{Proof of Theorem \ref{mt}}

For the proof of Theorem \ref{mt} we need the following two lemmas, which follow  
from standard techniques and covering space theory of orbifolds.

\begin{lemma}\label{induced-covering} Let $M$ be a connected orbifold and
  $f:\tilde M\to M$ be a connected covering space of the underlying space of $M$.
  Then, $\tilde M$ has an orbifold structure induced from $M$ by $f$, so that
  $f$ is an orbifold covering map.\end{lemma}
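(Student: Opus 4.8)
The plan is to transport the orbifold charts of $M$ up through the covering map $f$ and check the compatibility condition. First I would fix a point $\tilde x \in \tilde M$ with $f(\tilde x) = x$, and pick an orbifold chart $(\tilde U, p, G)$ of $M$ with $x \in U$; after shrinking $U$ I may assume $U$ is evenly covered by $f$ in the ordinary sense, so that the component $\tilde V$ of $f^{-1}(U)$ containing $\tilde x$ maps homeomorphically to $U$ via $f|_{\tilde V}$. I then declare $(\tilde U, \, (f|_{\tilde V})^{-1}\circ p, \, G)$ to be an orbifold chart for $\tilde M$ near $\tilde x$; informally, $\tilde V$ carries "the same" chart as $U$, with the same local group $G$ and the same uniformizing set $\tilde U$. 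Doing this over a cover of $M$ by evenly-covered chart domains produces a candidate collection ${\cal O}_{\tilde M}$ of orbifold charts on $\tilde M$.

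Next I would verify the compatibility condition from the definition of orbifold. Given $\tilde x \in \tilde V_i \cap \tilde V_j$, its image $x = f(\tilde x)$ lies in $U_i \cap U_j$ downstairs, so there is a chart $(\tilde W, p_W, H)$ and embeddings $q_i : \tilde W \to \tilde U_i$, $q_j : \tilde W \to \tilde U_j$ with $p_i \circ q_i = p_W = p_j \circ q_j$ over a neighborhood $W$ of $x$. Shrinking $W$ so that it is evenly covered and lies inside $U_i \cap U_j$, the component $\tilde W'$ of $f^{-1}(W)$ through $\tilde x$ sits inside $\tilde V_i \cap \tilde V_j$, and because $f|_{\tilde V_i}, f|_{\tilde V_j}, f|_{\tilde W'}$ are all homeomorphisms compatible with $f$, the same maps $q_i, q_j$ and the same chart $(\tilde W, p_W, H)$ witness compatibility upstairs: one just composes with the relevant local inverses of $f$, and the two triangle identities pull back verbatim. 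This also shows the transition data is independent of the choices, so ${\cal O}_{\tilde M}$ is a genuine orbifold atlas and $\tilde M$ is second countable and Hausdorff since $M$ is and $f$ is a covering.

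Finally I would check that $f$ is an orbifold covering map in the sense of Definition \ref{covering}. For $x \in M$ choose the chart $(\tilde U, p, G)$ as above with $U$ evenly covered; then $f^{-1}(U) = \bigcup_i V_i$ with each $V_i \to U$ a homeomorphism, each $V_i$ carries the chart $(\tilde U, (f|_{V_i})^{-1}\circ p, G)$ we built, and we may take $\rho_i : G \to G$ to be the identity (so $H_i = G$), with $\tilde V_i = \tilde U$ and $\tilde f = \mathrm{id}_{\tilde U}$; the two squares in the diagram of Definition \ref{covering} then commute trivially, and $\tilde f$ is vacuously $\rho_i$-equivariant. The only mild subtlety — and the step I expect to need the most care — is the shrinking arguments: one must simultaneously arrange that a chart domain is evenly covered by $f$ \emph{and} small enough to host the compatibility chart $\tilde W$, and confirm that the homeomorphism property of $f$ on each sheet really does turn the downstairs triangle identities into upstairs ones without introducing new local group contributions. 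Everything else is bookkeeping with the definitions.
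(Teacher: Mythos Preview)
Your proposal is correct and follows essentially the same approach as the paper: pull back orbifold charts of $M$ through the covering map $f$ using evenly covered neighborhoods, with the same local groups. The paper's proof is a one-sentence sketch of exactly this idea, while you have supplied the compatibility and covering-map verifications that the paper leaves implicit.
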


\begin{proof} Choose an open covering of $\tilde M$ which consists of inverse
  images under $f$ of evenly covered open subsets of $M$ which are associated to
  orbifold charts of $M$, and then pull back the
  finite group actions using $f$. That gives the required orbifold structure on
$\tilde M$.\end{proof}

\begin{lemma}\label{injection} Let $q:\tilde M\to M$ be a finite sheeted 
orbifold covering map between two connected 
orbifolds. Then, the induced map $q_*:\pi_1^{orb}(\tilde M)\to 
\pi_1^{orb}(M)$ is an injection, and the image 
$q_*(\pi_1^{orb}(\tilde M))$ is a finite index subgroup of $\pi_1^{orb}(M)$.
\end{lemma}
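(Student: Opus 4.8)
The plan is to reduce the statement to a purely algebraic fact about Deck transformation groups, using the universal orbifold covers. First I would form the universal orbifold cover $\pi\colon \hat M\to M$, so that $\pi_1^{orb}(M)$ is its Deck group. The key observation is that $\hat M$ is also an orbifold covering space of $\tilde M$: since $q$ is an orbifold covering map and $\pi$ factors through any connected orbifold cover of $M$ by the universal property in the definition of the universal orbifold cover, there is an orbifold covering map $\hat\pi\colon\hat M\to\tilde M$ with $q\circ\hat\pi=\pi$. I would then check, again from the universal property (applied to $\tilde M$ in place of $M$), that $\hat\pi$ is in fact \emph{the} universal orbifold cover of $\tilde M$; this is where one uses connectedness of $\hat M$ and the fact that $\hat M$ has no nontrivial connected orbifold covers. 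Hence $\pi_1^{orb}(\tilde M)$ is the Deck group of $\hat\pi$.

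Next I would identify $q_*$ concretely. Every Deck transformation of $\hat\pi\colon\hat M\to\tilde M$ permutes the fibers of $\hat\pi$, which are contained in the fibers of $\pi=q\circ\hat\pi$, so it is also a Deck transformation of $\pi$; this inclusion of Deck groups is exactly $q_*$, and it is manifestly an injective group homomorphism. This gives the first assertion. For the index statement, fix a regular point $x\in M$ and a lift $\hat x\in\hat M$. The group $G:=\pi_1^{orb}(M)$ acts on the fiber $\pi^{-1}(x)$; this action is transitive (connectedness of $\hat M$) with stabilizers trivial at $\hat x$, so $\pi^{-1}(x)$ is a $G$-torsor, and $\pi^{-1}(x)$ decomposes according to the finitely many points of $q^{-1}(x)$ — here is where finiteness of $q$ enters. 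The subset of $\pi^{-1}(x)$ lying over a chosen $\tilde x\in q^{-1}(x)$ with $\hat x$ mapping to $\tilde x$ is precisely the orbit of $\hat x$ under $H:=q_*(\pi_1^{orb}(\tilde M))$, so the $G$-orbit space $G/H$ is in bijection with $q^{-1}(x)$, whence $[G:H]=|q^{-1}(x)|<\infty$.

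The main obstacle I anticipate is not the group theory but the verification that $\hat M\to\tilde M$ really is the universal orbifold cover and that these covering maps behave well on fibers over a regular point: one must be careful because, as the paper warns right after Definition \ref{covering}, the underlying-space map of an orbifold covering need not be an honest covering map, and fibers over singular points can have varying cardinality. Restricting attention to a regular point $x$ (which exists, and which the definition of the universal orbifold cover is set up around) sidesteps this: near such $x$ the orbifold cover looks like an ordinary covering, so the torsor/orbit-counting argument goes through. I would also want to invoke standard orbifold covering-space theory (from \cite{Thu}, Chapter 5) for the lifting criterion that produces $\hat\pi$ and for the correspondence between connected orbifold covers and subgroups of $\pi_1^{orb}$; given those tools, the remaining steps are routine.
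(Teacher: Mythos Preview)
Your argument is correct in outline and is the standard covering-space proof, but it differs from the paper's treatment: the paper gives no argument at all and simply cites \cite{Che1}, Corollary 2.4.5. So you are supplying a self-contained proof where the author is content to point to the literature.

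What you gain is independence from \cite{Che1} and transparency: the reader sees directly that the lemma is just the orbifold analogue of the classical statement that a connected cover corresponds to a subgroup of $\pi_1$, with index equal to the number of sheets. What the citation buys is brevity and the avoidance of exactly the technical checks you flag---that compositions of orbifold covering maps are orbifold covering maps, that the universal orbifold cover of $M$ is also universal for $\tilde M$, and that the Deck group acts simply transitively on the fiber over a regular point. None of these is hard, but each requires unwinding Definition~\ref{covering} carefully (and for the composition statement one should say a word, since it is used implicitly when you pass from a cover $N\to\tilde M$ to $N\to M$ in order to invoke the universal property). If you want your write-up to be genuinely self-contained, make those three verifications explicit; otherwise your plan is sound.
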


\begin{proof} See [\cite{Che1}, Corollary 2.4.5].\end{proof}

\begin{proof}[Proof of Theorem \ref{mt}] Let $S$ be a $2$-dimensional orbifold as in the statement. 

Note that, the underlying space of a $2$-dimensional 
orbifold is a $2$-dimensional (smooth) manifold. (This follows easily from the discussion below
on the possible types of singular points on a $2$-dimensional
orbifold and the fact that manifolds of dimension $\leq 3$ has unique smooth structure.
Also see [\cite{Sc}, p. 422, last paragraph]). Therefore, 
after going to a two sheeted cover of the underlying space of $S$
and using Lemma \ref{induced-covering}, we can assume that the underlying space of $S$ is 
an orientable $2$-dimensional manifold. (In such a situation, it is standard to call the orbifold 
{\it orientable}).

For the remaining part of the proof we refer the reader to the discussion on pages
422-424 of \cite{Sc}.

It is known that, $S$ has three types of singularities: cone points, reflector lines and 
corner reflectors ([\cite{Sc}, p. 422]). 

Note that, other than the cone points the remaining singular set contributes to 
the orbifold boundary components 
of the orbifold. We take double of $S$ along the orbifold boundary, then 
we get an orientable orbifold $\tilde S$ which is a $2$-sheeted  
orbifold covering of $S$ and $\tilde S$ has only cone singularities ([\cite{Sc}, p. 423]). 
Furthermore, 
since $\pi_1^{orb}(\tilde S)$ is a finite index subgroup of $\pi_1^{orb}(S)$ 
(Lemma \ref{injection}), $\pi_1^{orb}(\tilde S)$ is again infinite and finitely generated. 

So, to prove the Theorem we only have to show that $\tilde S$ is a good orbifold. 

Note that, $\tilde S$ has no orbifold boundary component. If there is any manifold
boundary of $\tilde S$, then we take the double of $\tilde S$ along these boundary  
components and denote it by $D\tilde S$. Therefore, $D\tilde S$ is an orientable orbifold with
no boundary in its underlying space and only has cone singularities.
Also, if $D\tilde S$ is good then
so is $\tilde S$.

We now prove that $D\tilde S$ is good.

First, we consider the case when $D\tilde S$ is noncompact. 
The cone points 
form a discrete subset of $D\tilde S$ and hence we can write the underlying space of $D\tilde S$ as an 
increasing union of compact orientable sub-manifolds $S_i$, $i\in {\Bbb N}$ with no singular points 
on the boundary of $S_i$. (This can be done by taking a proper smooth map 
from the underlying space of $D\tilde S$ to ${\Bbb R}$.) Let $DS_i$ be the double of $S_i$. 
$DS_i$ is an orbifold with twice as many cone points as $S_i$. Suppose $DS_i$ is of 
genus $g_i$ and has $k_i$ cone points with orders $p_1, p_2,\ldots, p_{k_i}$. Then, $DS_i$ 
is a closed orientable orbifold with only cone singularities, and by
[\cite{Sc}, p. 424] the orbifold fundamental group
of $DS_i$ is given by the following presentation. 

$$\pi_1^{orb}(DS_i)\simeq\l a_1,b_1,\ldots,a_{g_i},b_{g_i},x_1,\dots,x_{k_i}\ |\ x_j^{p_j}=1,\ j=1,2,\ldots, k_i,$$
$$\Pi_{j=1}^{g_i}[a_j,b_j]x_1x_2\cdots x_{k_i}=1\r.$$

Hence, the abelianization of $\pi_1^{orb}(DS_i)$ is
isomorphic to ${\Bbb Z}^{2g_i}\oplus K_{k_i-1}$. Where, $K_{k_i-1}$ is a finite abelian
group with $k_i-1$ number of generators. 
Furthermore, $g_1\leq g_2\leq\cdots$ and $k_1\leq k_2\cdots$.

This shows that, since $\pi_1^{orb}(D\tilde S)$ is finitely generated (as $\pi_1^{orb}(\tilde S)$ is
finitely generated), there is an 
$i_0$ such that $g_j=g_l$ and $k_j=k_l$ for all $j,l\geq i_0$.
   
Therefore, $D\tilde S$ has finitely many cone points, all contained in $S_{i_0}$,
and outside $S_{i_0}$, $D\tilde S$ is 
a finite union of components, each homeomorphic to the infinite
cylinder ${\Bbb S}^1\times (0, \infty)$.
Next, we cut the infinite ends of
$D\tilde S$ at some
finite stage and denote the resulting compact orbifold again by $S_{i_0}$.

Note that, here $\pi_1^{orb}(S_{i_0})$ is a subgroup of 
$\pi_1^{orb}(DS_{i_0})$ and hence $\pi_1^{orb}(DS_{i_0})$ is infinite. 
Hence, from the classification of closed 
$2$-dimensional orbifolds using geometry (see [\cite{Thu}, Theorem 5.5.3]),  
it follows that $DS_{i_0}$ is a good orbifold. Since, $DS_{i_0}$ has even number 
of cone points, and in the case of two cone points they have the same 
orders. See Figure 2. Clearly, then $S_{i_0}$, and hence $D\tilde S$ (which is homeomorphic
to the interior of $S_{i_0}$) has an 
orbifold covering space, which is a manifold. 

Next, if $D\tilde S$ is compact then the same argument as in the above paragraph  
shows that it is good.

This completes the proof of the Theorem.
\end{proof}

\begin{rem}\label{final-remark}{\rm
We end the paper with a remark on $2$-dimensional orbifolds with finite
orbifold fundamental group. As in the proof of the Theorem, after going to a
finite sheeted covering and then doubling along manifold boundary components,
we can assume that the orbifold (say, $S$) is orientable and has finitely many 
cone singularities.
Ignoring the cone points we have a surjective
homomorphism $\pi_1^{orb}(S)\to \pi_1(S)$. Hence, the fundamental group
of the underlying space of $S$ is finite and therefore, the underlying space is
${\Bbb S}^2$ or ${\Bbb R}^2$. It is easy to see that in the
last case $S$ is good and in the
${\Bbb S}^2$ case we already have a classification (Remark \ref{good-orbifold}).}\end{rem}

\newpage
\bibliographystyle{plain}
\ifx\undefined\bysame
\newcommand{\bysame}{\leavevmode\hbox to3em{\hrulefill}\,}
\fi

\end{document}